\newtheorem{thm}{Theorem}
\newtheorem{prop}[thm]{Proposition}
\newtheorem{conj}[thm]{Conjecture}
\theoremstyle{definition}
\newcommand{\Z}{\mathbb{Z}}
\newcommand{\R}{\mathbb{R}}
\newcommand{\pO}{\mathcal{O}}
\newcommand{\RP}{\mathbb{RP}}
\newcommand{\es}{\emptyset}
\newcommand{\sm}{\setminus}
\newcommand{\nin}{\not\in}
\newcommand{\pow}{P\'{o}r and Wood}
\begin{document}

\nocite{*}

\title{A counterexample to a geometric Hales-Jewett type
        conjecture}

\author{Vytautas~Gruslys\footnote{%
Department of Pure Mathematics and Mathematical Statistics,
Centre for Mathematical Sciences,
University of Cambridge,
Wilberforce Road,
Cambridge,
CB3 0WB,
United Kingdom;
e-mail: {\tt v.gruslys@dpmms.cam.ac.uk}
}}

\maketitle

\begin{abstract}
    \pow{} conjectured that for all
    $k,l \ge 2$ there exists $n \ge 2$ with the following property:
    whenever $n$ points, no $l + 1$ of which are collinear, are chosen in the
    plane and each of them is assigned one of $k$ colours, then there must
    be a line (that is, a maximal set of collinear points) all of whose
    points have the same colour. The conjecture is easily seen to be true
    for $l = 2$ (by the pigeonhole principle) and in the case $k = 2$ it is
    an immediate corollary of the Motzkin-Rabin theorem. In this note we show
    that the conjecture is false for $k, l \ge 3$.
\end{abstract}

%\maketitle

\section{Introduction}

Given a finite set $S$ in the plane, we will use the term \emph{line} to
denote any maximal set of collinear points of $S$. \pow{} posed the following
conjecture.

\begin{conj}[\pow{} \cite{por10}]
    \label{conj:main}
    For all integers $k \ge 1$ and $l \ge 2$, there is an integer $n$ such
    that for every finite set $S$ of size $|S| \ge n$ in the plane $\R^2$,
    if each point of $S$ is assigned one of $k$ colours, then

    \begin{itemize}
        \item $S$ contains $l+1$ collinear points, or
        \item $S$ contains a monochromatic line.
    \end{itemize}
\end{conj}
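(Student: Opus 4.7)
The plan is to refute Conjecture~\ref{conj:main} in the base case $k = l = 3$; this suffices for all $k, l \ge 3$, since we may use only three of the $k$ available colours, and "no $4$ collinear points" is a stricter requirement than "no $l+1$ collinear" whenever $l \ge 3$. Thus it is enough to produce, for every $n$, a finite set $S \subset \R^2$ with $|S| \ge n$, no four collinear points, and a $3$-colouring of $S$ in which no maximal collinear subset is monochromatic.

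The approach I would take uses the chord-tangent group law on a real elliptic curve. Pick a smooth planar cubic $E \subset \R^2$ whose real locus is connected (for instance, $E: y^2 = x^3 + x + 1$, whose discriminant is negative), so that $E(\R) \cong \R/\Z$ as a topological group. Fix $m$ a multiple of $3$ with $m \ge n$, and take $S$ to be the unique cyclic subgroup $G \le E(\R)$ of order $m$. Two standard facts now carry the geometry. First, by B\'ezout every line meets $E$ in at most three points, so $S$ has no four collinear points. Second, by the group law, three distinct points of $E(\R)$ are collinear if and only if they sum to $O$; hence every secant line through two distinct $P, Q \in G$ also contains the third point $-P - Q \in G$, while the tangent at $P$ meets $G$ in just the pair $\{P, -2P\}$ when $3P \ne O$ (and in $\{P\}$ alone when $3P = O$, in which case it plays no role). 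So the only maximal collinear subsets of $S$ with at least two elements are these secant triples and these tangent pairs.

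For the colouring, identify $G$ with $\Z/m$ via a generator and set $c(a) = \lfloor 3a/m \rfloor$, partitioning $\Z/m$ into three equal arcs $A_0, A_1, A_2$. I would then verify two elementary claims.
\begin{itemize}
    \item \emph{Secant check.} Three distinct elements of $A_i$ have sum strictly between $im$ and $(i+1)m$, hence never congruent to $0$ modulo $m$. So no $3$-point line is monochromatic.
    \item \emph{Tangent check.} If $a \in \Z/m$ satisfies $3a \not\equiv 0$, then $c(a) \ne c(-2a \bmod m)$, by a short case analysis showing that $a \mapsto -2a$ sends each arc into the union of the other two.
\end{itemize}

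These two checks together show that $S$ has no monochromatic line; since $m$ can be taken arbitrarily large, the conjecture fails for $(k,l) = (3,3)$, and hence for all $k, l \ge 3$. The two geometric inputs (B\'ezout, and the group law on a real elliptic curve) are standard. The secant check is a one-line arithmetic inequality. The step most prone to subtle miscounting---and therefore the main obstacle---is the tangent check, where one has to handle arc boundaries, the three $3$-torsion points (at which $-2a = a$ and no constraint arises), and the behaviour of $a \mapsto -2a$ on the "wraparound" from $A_2$ back to $A_0 \cup A_1$.
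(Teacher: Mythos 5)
Your construction is essentially the one in the paper --- the paper itself remarks that ``any choice of an elliptic curve whose group is isomorphic to $\R/\Z$ would do'' --- but as written it has one genuine gap: your set $S$ is not contained in $\R^2$. The subgroup $G \le E(\R)$ of order $m$ necessarily contains the identity of the group law, which for a Weierstrass cubic such as $y^2 = x^3 + x + 1$ is the inflection point at infinity. So $S$ lives in $\RP^2$, not in the affine plane, while the conjecture you are refuting is a statement about finite sets in $\R^2$. This is exactly why the paper's proof of Theorem~\ref{thm:main} begins with a reduction step: it suffices to build the counterexample in $\RP^2$, because one can choose a line of $\RP^2$ disjoint from the finite set $S$ and apply a projective transformation sending that line to the line at infinity; the image of $S$ then lies in $\R^2$ and all collinearity relations are preserved. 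Your proposal needs this step (or some substitute, such as re-basing the chord--tangent law at an affine inflection point), and it never appears.

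Two smaller points of comparison. First, the paper deliberately avoids quoting facts from the theory of elliptic curves: it works with the singular cubic $y^2 = x^3 - x^2$, whose non-singular points admit the explicit parametrization $\phi(x) = (\cot(\pi x)^2+1, \cot(\pi x)(\cot(\pi x)^2+1))$, and it proves the ``collinear iff $x+y+z=0$'' statement (Proposition~\ref{prop:isom}) by hand from the cotangent addition formula; your version is correct but rests on cited theory, so it is less self-contained. Second, the step you single out as the main obstacle --- the tangent check --- is not really an obstacle: it is the same one-line arithmetic as your secant check. If $a$ and $-2a$ lie in the same arc $A_i$, write $a/m = i/3 + \alpha$ and $(-2a \bmod m)/m = i/3 + \gamma$ with $\alpha, \gamma \in [0, \tfrac{1}{3})$; then $2\alpha + \gamma$ differs from $a/m + a/m - 2a/m \equiv 0$ by an integer, so it is an integer in $[0,1)$, hence $0$, forcing $3a \equiv 0 \pmod{m}$, which you have excluded. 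The paper organizes the proof so that secants and tangents are handled by this single computation (by allowing $k = i$ or $k = j$ in the relation $i + j + k \equiv 0 \pmod{n}$), so no case analysis of arc boundaries, $3$-torsion, or wraparound is needed at all.
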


The motivation for this conjecture comes from the Hales-Jewett theorem. By a
\emph{combinatorial line} in the grid $[l]^n \subset \R^n$ (where $[l]$ stands
for the set $\{1, 2, \dotsc, l\}$) we mean a set of the form
$$
\{(x_1, \dotsc, x_n) \in [l]^n : x_i = x_j \text{ for all } i,j \in I\}
$$
for fixed $I \subset [n], I \neq \es$ and fixed $x_i, i \in [n] \sm I$.
Now the Hales-Jewett theorem can be stated as follows.

\begin{thm}[Hales and Jewett \cite{hales63}]
    \label{thm:hales-jewett}
    For all integers $k, l \ge 1$, there is an integer $n$ such that whenever
    each of the points in $[l]^n \subset \R^n$ is given one of $k$ colours,
    there is a monochromatic combinatorial line.
\end{thm}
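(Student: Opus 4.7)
I would prove Theorem~\ref{thm:hales-jewett} by induction on the alphabet size $l$, following Shelah's focusing argument. The base case $l=1$ is immediate since $[1]^n$ is a single point. For the inductive step, assume the conclusion at $l-1$ with some function $N(k', l-1)$ giving a valid dimension for $k'$ colours, and fix $k$. Define block sizes $n_1, \ldots, n_k$ recursively by $n_j = N\!\left(k^{l^{n_1+\cdots+n_{j-1}}},\, l-1\right)$, set $n = n_1+\cdots+n_k$, and partition the coordinates of $[l]^n$ into blocks $B_1, \ldots, B_k$ with $|B_j| = n_j$.

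Given a $k$-colouring $c$ of $[l]^n$, I construct combinatorial lines $\lambda_j \subset [l-1]^{n_j}$ in reverse order $j = k, k-1, \ldots, 1$. When $\lambda_{j+1}, \ldots, \lambda_k$ have already been chosen, I colour $[l-1]^{n_j}$ by the ``profile'' $y \mapsto \bigl(x \mapsto c(x, y, \lambda_{j+1}(1), \ldots, \lambda_k(1))\bigr)$, where $\lambda_i(t)$ denotes the $t$-th point of the extension of $\lambda_i$ to $[l]^{n_i}$. This profile takes one of at most $k^{l^{n_1+\cdots+n_{j-1}}}$ values, so the inductive hypothesis produces $\lambda_j$ on which the profile is constant; extending $\lambda_j$ to $[l]^{n_j}$ gives a combinatorial line whose variable coordinates range over $\{1, \ldots, l\}$.

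Define the focused points $Q_j = (\lambda_1(l), \ldots, \lambda_j(l), \lambda_{j+1}(1), \ldots, \lambda_k(1))$ for $j = 0, 1, \ldots, k$. By pigeonhole some $a < b$ satisfy $c(Q_a) = c(Q_b)$; the combinatorial line $L$ in $[l]^n$ that moves the variable coordinates of $\lambda_{a+1}, \ldots, \lambda_b$ simultaneously from $1$ to $l$ (other blocks held fixed) connects $Q_a$ to $Q_b$. To see that $L$ is monochromatic, one reaches each intermediate point $L(t)$ (for $t \in \{2, \ldots, l-1\}$) from $Q_a$ by moving blocks $a+1, \ldots, b$ one at a time from $1$ to $t$: each step preserves the colour by the focusing property of the relevant $\lambda_j$, which applies because blocks $j+1, \ldots, k$ stay on the non-$l$ part of their $\lambda$'s throughout the sub-walk. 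The endpoint $L(l) = Q_b$ has the right colour by the pigeonhole choice.

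The main obstacle will be the careful bookkeeping: verifying that the focusing condition used to construct each $\lambda_j$ matches the configurations under which it is later invoked along the interpolating line, and checking that the tower-of-exponentials recursion for the $n_j$ closes. Building the $\lambda_j$'s in reverse order is the crucial technical move, since it lets the auxiliary profile used to select $\lambda_j$ reference the already-determined $1$-endpoints of the later $\lambda_i$'s.
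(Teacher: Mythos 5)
First, note that the paper does not prove Theorem~\ref{thm:hales-jewett} at all: it is quoted from Hales and Jewett \cite{hales63} purely as motivation for Conjecture~\ref{conj:main}, and no proof or sketch appears in the text. So your argument can only be judged on its own terms, and on those terms it is essentially correct. It is a colour-focusing proof: induction on $l$ alone, with the hypothesis at alphabet size $l-1$ invoked with hugely inflated palettes of $k^{l^{n_1+\cdots+n_{j-1}}}$ colours to extract the block lines $\lambda_j$, followed by pigeonhole on the $k+1$ focused points $Q_0,\dotsc,Q_k$. The recursion for the $n_j$ does close (it is a forward recursion, each $n_j$ depending only on $n_1,\dotsc,n_{j-1}$, even though the $\lambda_j$ are then constructed backwards), the profile count $k^{l^{n_1+\cdots+n_{j-1}}}$ is the right one for the profiles you define, and the interpolating line $L$ is a genuine combinatorial line. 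One labelling quibble: this is not Shelah's argument. Shelah's proof is precisely the one that avoids feeding extra colours into the induction hypothesis --- it finds the block lines by pigeonhole on chains of $n_i+1$ points inside each block, and applies the $(l-1)$-hypothesis only once, with the same $k$ colours, which is what yields primitive recursive bounds. Your version (colour blow-up plus focused points) is the classical Hales-Jewett-style argument and gives Ackermann-type bounds; since the theorem as stated asks for no bounds, this costs nothing here.

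One step should be stated more carefully, though it is not a fatal gap. The focusing property your construction actually guarantees for $\lambda_j$ holds only when blocks $j+1,\dotsc,k$ sit at exactly the points $\lambda_{j+1}(1),\dotsc,\lambda_k(1)$, not merely anywhere ``on the non-$l$ part'' of their lines, because the profile colouring used to select $\lambda_j$ fixes those blocks at their $1$-endpoints. Your walk from $Q_a$ to $L(t)$ is nevertheless valid, but only because you move blocks $a+1,\dotsc,b$ in increasing order of index: when block $j$ is moved, blocks $j+1,\dotsc,b$ have not yet been touched and blocks $b+1,\dotsc,k$ are never touched, so all later blocks are still exactly at their $1$-endpoints, which is precisely what the focusing property requires. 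Had the blocks been moved in decreasing order, the argument would break. Replacing the ``non-$l$ part'' justification with this one makes the proof complete.
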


Conjecture \ref{conj:main} is a natural geometric version of this theorem,
where the lines are not necessarily parallel to a fixed set of axes, and the
ambient set can be any set without many collinear points.

For $l=2$ the result is trivial: we may take $n = k+1$ and by the pigeonhole
principle there is a line containing two points of the same colour. The case
$k=2$ is a special case of the Motzkin-Rabin theorem that was proved in
\cite{motzkin67}.  In this paper we demonstrate by a counterexample that the
conjecture is false in the next smallest case $k = l = 3$, and hence it is
false whenever $k, l \ge 3$.

\begin{thm} \label{thm:main}
    For any $n \ge 2$, there is a set $S \subset \R^2$ of size $n$ satisfying:
    \begin{itemize}
        \item no four points of $S$ are collinear, and
        \item the points of $S$ can be coloured using three colours in such a
            way that no line is monochromatic.
    \end{itemize}
\end{thm}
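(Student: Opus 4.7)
The plan is to place the $n$ points on a suitable smooth cubic (elliptic) curve $E \subset \R^2$, chosen so that Bezout's theorem gives the ``no four collinear'' condition for free: any line meets a smooth cubic in at most three points. The task then reduces entirely to writing down an explicit 3-colouring.

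Take $E$ to be a real elliptic curve whose real locus $E(\R)$ is a single topological circle, with its identity $O$ chosen at an affine flex so that every torsion point lies in $\R^2$. Then $(E(\R),+)$ is isomorphic to $\R/\Z$ under the chord-tangent group law, and I let $S$ be the unique cyclic subgroup of order $n$, identified with $\Z/n$. Recall that three distinct points $p,q,r \in E$ are collinear iff $p+q+r = 0$ in the group, and the tangent at $p$ meets $E$ again at $-2p$. Since $S$ is closed under $(p,q) \mapsto -(p+q)$, the lines of $S$ are of exactly two kinds: $3$-point chords $\{p,q,-p-q\}$ coming from triples with $p+q+r \equiv 0 \pmod n$, and $2$-point tangents $\{p,-2p\}$ for $p \in S$ with $3p \not\equiv 0 \pmod n$. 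Avoiding a monochromatic line is therefore equivalent to finding $c \colon \Z/n \to \{1,2,3\}$ such that (i) no triple with $p+q+r \equiv 0 \pmod n$ is monochromatic, and (ii) $c(p) \neq c(-2p \bmod n)$ whenever $3p \not\equiv 0 \pmod n$.

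I propose to define $c$ by partitioning $\{0,1,\ldots,n-1\}$ into three consecutive arcs of sizes as close to $n/3$ as possible and colouring each arc a single colour. Condition (i) is immediate: three distinct elements of a single arc of length at most $\lceil n/3 \rceil$ lift to an integer interval whose triple sum lies in a range of length strictly less than $n$, which a short calculation shows cannot be a multiple of $n$. Condition (ii) would be verified by a direct case analysis tracking the image of each arc under the involution $p \mapsto -2p \pmod n$ and checking that it lands in the union of the other two arcs.

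The main obstacle is condition (ii): for certain residues of $n$ modulo small integers, the naive equally-sized partition has an arc that accidentally contains some $-2p$ with $p$ in the same arc, so the arc sizes must be fine-tuned. I expect to resolve this with a short case split on $n \bmod 6$, using slightly unequal arc sizes tailored to each residue, together with direct ad hoc verification of the handful of smallest values of $n$ (say $n \le 6$), for which an interval partition is anyway easy to check by hand.
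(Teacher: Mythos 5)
Your construction is essentially the paper's: the paper also takes the cyclic subgroup of order $n$ in a cubic whose real point group is $\R/\Z$ (it uses the singular cubic $y^2=x^3-x^2$ for the sake of an explicit parametrisation, noting that any elliptic curve with group $\R/\Z$ would do), colours it by three consecutive arcs of near-equal size, and works projectively before mapping into $\R^2$. Your reduction of the problem to the two conditions (i) (no monochromatic chord triple) and (ii) (no monochromatic tangent pair $\{p,-2p\}$) is sound, and your verification of (i) is correct and is the same interval computation the paper performs. The proof is incomplete, however, at condition (ii): you offer only a plan, and the plan as stated both rests on a false premise and uses a flawed tool.

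Concretely: the map $p \mapsto -2p \pmod n$ is not an involution (its square is $p \mapsto 4p$, which is the identity only when $n \mid 3$), and the image of an arc of length about $n/3$ under multiplication by $-2$ is spread over a range of length about $2n/3$, so it is not an arc; the ``tracking images under the involution'' analysis you propose would not run as described. More importantly, the obstacle you anticipate does not exist: the equal partition satisfies (ii) for \emph{every} $n$, with no case split on $n \bmod 6$, no fine-tuned arc sizes, and no ad hoc small cases. The clean way to see this---and exactly how the paper makes its whole proof uniform---is to treat a tangent line as a degenerate chord. Suppose $\{p,q\}$ with $q \equiv -2p \pmod n$ were monochromatic, both points lying in the arc $\{i : x \le i/n < x + \frac{1}{3}\}$ for some $x \in \{0,\frac{1}{3},\frac{2}{3}\}$. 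The triple $(p,p,q)$ has $2p + q \equiv 0 \pmod n$, so $(2p+q)/n$ is an integer; but $(2p+q)/n \in [3x, 3x+1)$ and $3x$ is an integer, so $(2p+q)/n = 3x$, forcing $p/n = q/n = x$ and hence $p = q$, a contradiction. The paper does precisely this: given two distinct points $\phi(i/n), \phi(j/n)$ on a putative monochromatic line, it takes $k \equiv -i-j \pmod n$ \emph{allowing} $k = i$ or $k = j$, and runs the interval argument once, covering chords and tangents in a single stroke. Adding this observation closes your gap; as written, condition (ii)---the only point where the two-point lines of the cubic actually have to be dealt with---is left unproven.
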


\section{Proof of Theorem \ref{thm:main}} 

We start by noting that it is sufficient to find a set with the required
properties in the projective plane $\RP^2$. Indeed, given a finite set
$S \subset \RP^2$, one can choose a line $l \subset \RP^2$ that does not
meet $S$ and apply a projective transformation that sends $l$ to the line
at infinity. The image of $S$ under this transformation is contained in the
affine plane $\R^2$ while the collinearity relations of the original set
$S$ are preserved.

Our counterexample is a finite subset of the irreducible cubic curve
$y^2 = x^3 - x^2$.  More specifically, we will use a subset of the set of
its non-singular points
$\Gamma = \{(x,y) \in \R^2 : y^2 = x^3 - x^2, x \neq 0\} \cup
\{\pO\} \subset \RP^2$
where $\pO$ is a point at infinity that is contained in all lines parallel
to the $y$-axis and in the line at infinity. By the B\'ezout theorem, $\Gamma$
does not contain a set of four collinear points. Moreover, it is a well known
fact in algebraic geometry that $\Gamma$ forms an abelian group with the
property that distinct points $P, Q, R \in \Gamma$ are collinear if and only
if $P+Q+R=0$, and that $\Gamma$ is isomorphic to the circle group $\R/\Z$
(see \cite{husemoeller04}, p. 19--20).

In fact, any choice of an elliptic curve whose group is isomorphic to
$\R / \Z$ would do.  However, we choose this particular cubic curve (which
is not an elliptic curve as it contains a singular point $(0,0)$) because
it admits a simple explicit group isomorphism $\phi : \R/\Z \to \Gamma$,
given by
$$
    \phi(x) =
    \begin{cases}
        (\cot(\pi x)^2+1, \cot(\pi x)(\cot(\pi x)^2+1))
             &\text{ if } x \neq 0, \\
        \pO &\text{ if } x = 0.
    \end{cases}
$$
This enables us to give a self-contained proof of the theorem without
referring to any results from algebraic geometry. However, the reader
familiar with elliptic curves can skip the proof of the following
proposition.

\begin{prop} \label{prop:isom}
    Let $x, y$ and $z$ be distinct elements of $\R / \Z$. Then the points
    $\phi(x), \phi(y)$ and $\phi(z)$ are collinear if and only if $x+y+z=0$.
    Moreover, \mbox{$\phi : \R / \Z \to \Gamma$} is a well defined bijection.
\end{prop}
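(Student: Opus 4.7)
\textbf{Proof plan for Proposition \ref{prop:isom}.}

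The plan is to reduce everything to elementary identities for the cotangent. First, for well-definedness and bijectivity: since $\cot(\pi x)$ has period $1$ and restricts to a strictly decreasing bijection $(0,1) \to \R$, the displayed formula descends to a well-defined map on $(\R/\Z) \sm \{0\}$. Setting $t = \cot(\pi x)$, a direct check gives $(t^2+1)^3 - (t^2+1)^2 = (t(t^2+1))^2$, so the image lies on $\Gamma$. Conversely, given $(a,b) \in \Gamma$ with $a \neq 0$, one recovers $t = b/a$ from the curve equation (so that $a = t^2+1$ and $b = t(t^2+1)$), and then the surjectivity of $\cot(\pi \cdot)$ on $(0,1)$ supplies a unique preimage.

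For the collinearity statement I would first assume that all three points lie in $\Gamma \sm \{\pO\}$ and write $t_i = \cot(\pi x_i)$. A line $y = mx + c$ meets $\Gamma$ precisely at points $\phi(x)$ whose parameter $t$ satisfies $t(t^2+1) = m(t^2+1) + c$, i.e.\ the cubic $t^3 - mt^2 + t - (m+c) = 0$. By Vieta's formulas the three points are collinear iff $t_1, t_2, t_3$ are the roots of such a cubic for some $m,c$; since $m$ and $c$ can always be chosen to match $\sum t_i = m$ and $\prod t_i = m+c$, the only genuine constraint is the middle elementary symmetric function,
\[ t_1 t_2 + t_1 t_3 + t_2 t_3 = 1. \]

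I would then match this algebraic condition with $x_1 + x_2 + x_3 \equiv 0 \pmod 1$. The forward direction comes from a standard trigonometric identity: when $\pi x_1 + \pi x_2 + \pi x_3 \in \pi\Z$, rewriting $\cot(\pi x_1 + \pi x_2) = -\cot(\pi x_3)$ via the cotangent addition formula yields precisely $\sum_{i<j} t_i t_j = 1$. For the converse, fix $x_1, x_2$ and let $x_3' = -x_1 - x_2$; then $\cot(\pi x_3')$ satisfies the same linear equation $(t_1 + t_2)T = 1 - t_1 t_2$ as $t_3$ does, so as long as $t_1 + t_2 \neq 0$ one concludes $t_3 = \cot(\pi x_3')$, i.e.\ $x_3 \equiv x_3'$. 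The one degenerate case $t_1 + t_2 = 0$ is incompatible with $\sum t_i t_j = 1$, since it would force $-t_1^2 = 1$.

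Finally, the case where some $x_i$ equals $0$ is easy: $\pO$ lies on the line through $\phi(x_1), \phi(x_2)$ iff that line is vertical, iff $\cot(\pi x_1)^2 = \cot(\pi x_2)^2$, iff $x_2 \equiv -x_1$ (using $x_1 \neq x_2$). The main obstacle is the middle step — extracting the clean symmetric relation $\sum t_i t_j = 1$ from collinearity via the Vieta trick and recognising it as the cotangent sum identity; once that link is made, the rest is essentially routine.
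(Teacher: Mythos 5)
Your proof is correct, but its central step takes a genuinely different route from the paper's. The paper first disposes of the degenerate cases (one of $x, y, z$ equal to $0$, or two of them summing to $0$), and in the generic case expresses collinearity of $\phi(x), \phi(y), \phi(z)$ as equality of two slopes, rearranges that equation to $c_z = -\frac{c_x c_y - 1}{c_x + c_y}$, and then applies the cotangent addition identity together with injectivity of $\cot$ on $(0,\pi)$ to conclude that $z = -x-y$ is the unique solution. You instead run the classical chord argument for cubic curves: you intersect a non-vertical line with the parametrized curve, observe that the parameters of the intersection points are the roots of $t^3 - mt^2 + t - (m+c) = 0$, and read off from Vieta's formulas that collinearity is equivalent to the symmetric condition $t_1 t_2 + t_1 t_3 + t_2 t_3 = 1$, which you then identify with $x_1 + x_2 + x_3 \equiv 0 \pmod 1$ via the same cotangent addition formula the paper uses. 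Your version is symmetric in the three points and makes transparent why an additive law appears at all --- it is exactly the group-law construction on a cubic --- whereas the paper's slope computation is more pedestrian but requires no discussion of how lines meet the curve. The one step you leave implicit is that three distinct affine points of $\Gamma$ cannot lie on a vertical line (the line $x = a$ meets the affine curve where $t^2 + 1 = a$, so in at most two points), which is what licenses writing the line as $y = mx + c$ in the forward direction; this is a one-line fix, not a flaw in the approach. Your handling of the point at infinity and of the degenerate case $t_1 + t_2 = 0$ is complete, and your explicit verification that $t \mapsto (t^2+1,\, t(t^2+1))$ parametrizes $\Gamma \setminus \{\pO\}$ is in fact more detailed than the paper's appeal to ``basic properties of the cotangent function.''
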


\begin{proof}
    The fact that $\phi$ is a well defined bijection follows from the basic
    properties of the cotangent function. To prove the equivalence of the
    geometric and algebraic relations, we will use the identity
    \begin{equation} \label{eq:cot}
        \cot(x+y) = \frac{\cot(x)\cot(y)-1}{\cot(x)+\cot(y)},
    \end{equation}
    which holds whenever $x+y, x, y$ are not multiples of $\pi$. Given a real
    number $r \nin \Z$, define $c_r = \cot(\pi r)$.

    If one of $x, y, z \in \R/\Z$ is $0$ (say, $x=0$) then $\phi(z)$
    is collinear with $\phi(x) = \pO$ and $\phi(y)$ if and only if $\phi(z)$
    is the reflection of $\phi(y)$ in the $x$-axis, that is, $z = -y$.
    Similarly, if two of the numbers (say, $x$ and $y$) sum to $0$, then
    the three points are collinear if and only if $\phi(z) = \pO$, that is,
    $z = 0$. Now we can assume that $x, y, z$ are all non-zero and that no
    two of them sum to $0$. Then the points $\phi(x), \phi(y)$ and $\phi(z)$
    are collinear if and only if
    $$
        \frac{c_z(c_z^2+1)-c_x(c_x^2+1)}{(c_z^2+1)-(c_x^2+1)}=
        \frac{c_z(c_z^2+1)-c_y(c_y^2+1)}{(c_z^2+1)-(c_y^2+1)},
    $$
    which after rearrangement becomes
    $$
        c_z = -\frac{c_x c_y - 1}{c_x+c_y}.
    $$
    Notice that $z = -x-y$ is a solution by (\ref{eq:cot}), and it is unique
    in $\R/\Z$ as $\cot$ is injective on $(0, \pi)$.
\end{proof}

Now we are ready to finish the proof of the theorem.

\begin{proof}[Proof of Theorem \ref{thm:main}]
    As noted before, it is enough to construct a set $S' \subset \RP^2$ with
    the two required properties, and take a projective transformation that
    maps $S'$ into $\R^2$.

    For the set $S'$ (see Fig.~\ref{fig:broad-scale} and \ref{fig:small-scale}) 
    we will take $S'=\{\phi(i/n) : i=0,\dotsc,n-1\}$. Notice
    that by Proposition \ref{prop:isom} there are no four collinear
    points in $S'$. Indeed, if $\phi(x), \phi(y), \phi(z)$ and $\phi(w)$ were
    distinct and collinear, then $z = -x-y = w$ in $\R / \Z$, giving a
    contradiction. Colour $\phi(i/n)$
    \begin{align*}
        \text{red } &\text{if } 0 \le i < \frac{n}{3}, \\
        \text{green } &\text{if } \frac{n}{3} \le i < \frac{2n}{3}, \\
        \text{blue } &\text{if } \frac{2n}{3} \le i < n.
    \end{align*}

    \begin{figure}[H] 
        \centering
        \includegraphics{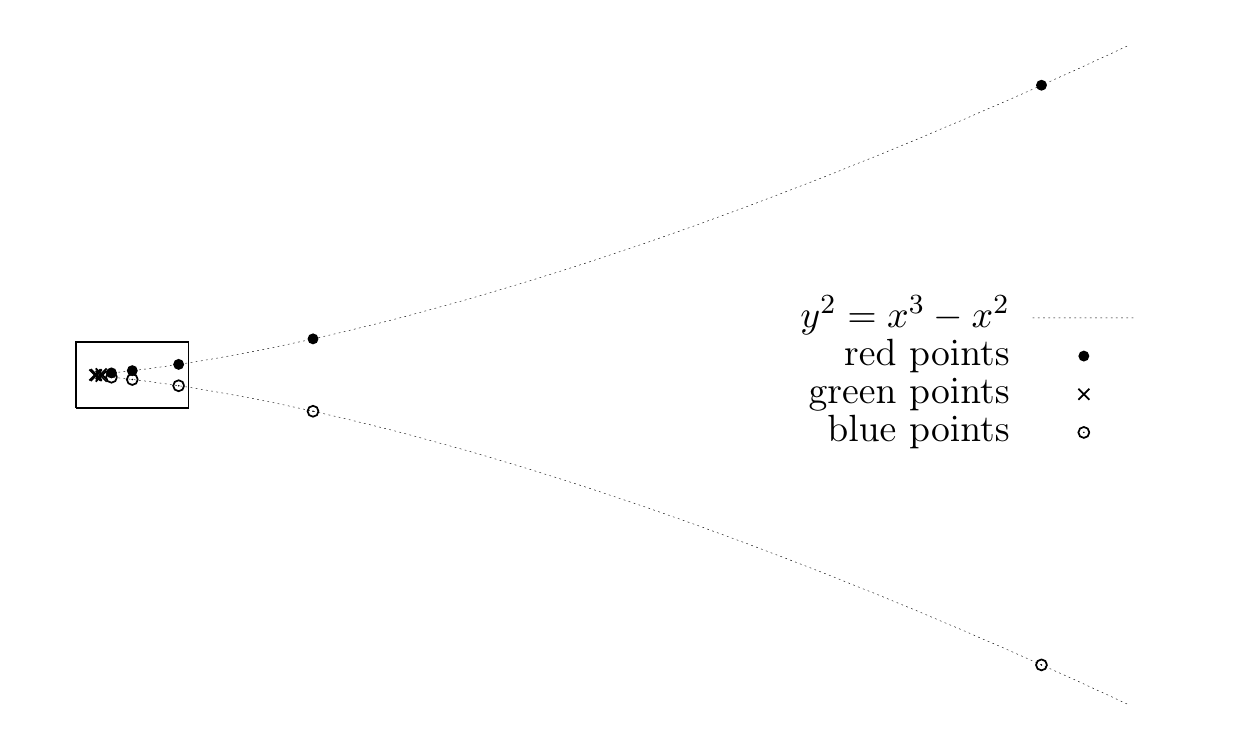}
        \caption{The set $S'$ with $n=16$. The sixteenth point is at infinity,
            and has red colour. The framed section is shown in smaller scale
            in Fig.~\ref{fig:small-scale}.}
        \label{fig:broad-scale}
    \end{figure}

    \begin{figure}[H] 
        \centering
        %\vspace{-8mm}
        \includegraphics{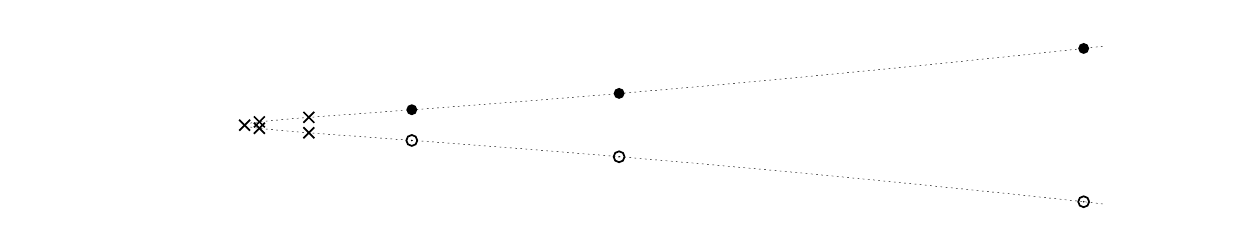}
        \caption{Part of the set $S'$ with $n=16$ in smaller scale.}
        \label{fig:small-scale}
    \end{figure}

    Suppose for contradiction that there is a monochromatic line $l$. It must
    pass through two distinct points $\phi(i/n)$ and $\phi(j/n)$, 
    $0 \le i,j < n$. There is an integer $0 \le k < n$ such that
    $k \equiv -i-j\,(\text{mod }n)$, possibly $k = i$ or $k = j$.
    Then $i/n + j/n + k/n = 0$ in $\R / \Z$, and so by Proposition
    \ref{prop:isom} either $\phi(i/n), \phi(j/n)$ and $\phi(k/n)$ are
    dictinct colinear points, or $\phi(k/n)$ coincides with one of the other
    two points. In either case $l$ passes through all of these points, and
    hence they have the same colour.

    Now write $i/n = x + \alpha\,, j/n = x + \beta$ and $k/n = x + \gamma$,
    where $x \in \{0, \frac{1}{3}, \frac{2}{3}\}$ and $\alpha, \beta,
    \gamma \in [0, \frac{1}{3})$. Considered as real numbers,
    $3x$ and $i/n + j/n + k/n = 3x + \alpha + \beta + \gamma$ are integers,
    so $\alpha + \beta + \gamma$ is also an integer. But $0 \le \alpha,
    \beta, \gamma < \frac{1}{3}$, so this is only possible if $\alpha =
    \beta = \gamma = 0$. In particular, $i/n = j/n$,
    contradicting the assumption that $\phi(i/n) \neq \phi(j/n)$.

    This finishes the proof.
\end{proof}

%\section*{References}
\bibliographystyle{elsarticle-num}
%\bibliographystyle{abbrv}
%\bibliographystyle{plain}
%\bibliography{bib/paper}

\begin{thebibliography}{9}

\bibitem{hales63}
    A.W. Hales and R.I. Jewett,
    Regularity and positional games,
    Trans. Amer. Math. Soc. 106 (1963),
    222--229.

\bibitem{husemoeller04}
    D. Husem\"{o}ller,
    Elliptic curves
    (2nd ed., Springer, 2004).

\bibitem{motzkin67}
    T.S. Motzkin,
    Nonmixed connecting lines,
    Notices Amer. Math. Soc. 14 (1967),
    837.

\bibitem{por10}
    A. P\'{o}r and D.R. Wood,
    On visibility and blockers,
    J. Computational Geometry 1 (2010),
    29--40.

\end{thebibliography}

\end{document}